\newtheorem{theorem}{Theorem}
\newtheorem{definition}{Definition}
\newtheorem{lemma}{Lemma}
\newtheorem{problem}{Problem}
\newcommand{\0}{\mathaccent23}
\newcommand\tr{\operatorname{tr}}
\newcommand\grad{\operatorname{grad}}
\renewcommand\div{\operatorname{div}}
\newcommand\curl{\operatorname{curl}}
\newcommand{\bs}{{\scriptscriptstyle \bullet}}
\begin{document}

\title{Generalized Gaffney inequality and discrete compactness for discrete differential forms}

\author{Juncai He\thanks{School of Mathematical Sciences, Peking University, Beijing 100871, China. email: {\tt juncaihe@pku.edu.cn}} \and Kaibo Hu\thanks{Department of Mathematics,
University of Oslo,Oslo 0316, Norway. email: {\tt kaibohu@math.uio.no}}\and Jinchao Xu\thanks{Department of Mathematics, Penn. State University, University Park, PA 16802, USA. Email: {\tt xu@math.psu.edu}}}

\date{}

\maketitle

\begin{abstract}
We prove generalized Gaffney inequalities and the discrete compactness for finite element differential forms on $s$-regular domains, including general Lipschitz domains. In computational electromagnetism, special cases of these results have been established for edge elements with weakly imposed divergence-free conditions and used in the analysis of nonlinear and eigenvalue problems. In this paper, we generalize these results to discrete differential forms,  not necessarily with strongly or weakly imposed constraints. The analysis relies on 
 a new Hodge mapping and its approximation property. As an application, we show $L^{p}$ estimates for several finite element approximations of the scalar and vector Laplacian problems.
\end{abstract}

\section{Introduction}

Let $\Omega\subset \mathbb{R}^{n}$ be an $s$-regular domain ($1/2\leq s\leq 1$) (c.f. \cite{mitrea2001layer}) with trivial cohomology and 
\begin{align*}
Z^{k}:&=\0H\Lambda^{k}\cap H^{\ast}\Lambda^{k}(\Omega)\\
&=\left \{ w\in L^{2}\Lambda^{k}(\Omega): \mathrm{d}w\in L^{2}\Lambda^{k+1}(\Omega), \left .\tr\right |_{\partial\Omega} w=0\right \}\cap \left \{ w\in L^{2}\Lambda^{k}(\Omega): \delta w\in L^{2}\Lambda^{k-1}(\Omega)\right \},
\end{align*}
be the space of differential $k$-forms with vanishing trace on the boundary. 
The generalized Gaffney inequality 
$$
\|w\|^{2}_{L^{p}}\leq C\left ( \|\mathrm{d}w\|_{L^{2}}^{2}+\|\delta w\|_{L^{2}}^{2}\right ),
$$
and the compactness $Z^{k}\hookrightarrow L^{p}(\Omega)$ are two important properties of $Z^{k}$ and play a crucial role  in the analysis of nonlinear and eigenvalue problems for differential forms (see, e.g., \cite{christiansen2011convergence,Schotzau.D.2004a,HLMZ18_2581,boffi2010finite}). 

For numerical methods for differential forms and Hodge Laplacian, approximation of $Z^{k}$ by the classical $C^{0}$ finite elements will cause notorious pseudo-solutions and instability (c.f. \cite{Hiptmair.R.2002a,Arnold.D;Falk.R;Winther.R.2006a,boffi2010finite}). To cure this problem, one could approximate $Z^{k}$ by a finite dimensional space $\0 H_{h}\Lambda^{k}\subset \0 H\Lambda^{k}$. We refer to \cite{Bossavit.A.1998a,Hiptmair.R.2002a,Arnold.D;Falk.R;Winther.R.2006a} for details on the discrete differential forms and the finite element exterior calculus. The space $\0 H_{h}\Lambda^{k}$ is a nonconforming approximation of $Z^{k}$ since the codifferential operator cannot be taken in the $L^{2}$ sense, and this causes a difficulty in the numerical analysis. In particular, the generalized Gaffney inequality and the compactness cannot be inherited from $Z^{k}$.

The discrete differential forms fit in a complex
\begin{equation}\label{complex-Vh}
\begin{diagram}
0  & \rTo^{}& \0 H_{h}\Lambda^{0} & \rTo^{\mathrm{d}} &\0H_{h}\Lambda^{1} & \rTo^{\mathrm{d}} & \cdots &  \rTo^{\mathrm{d}}&\0 H_{h}\Lambda^{n}  & \rTo^{}& 0.
\end{diagram}
\end{equation}
A discrete Hodge decomposition follows:
$$
\0H_{h}\Lambda^{k}= \mathrm{d}\0H_{h}\Lambda^{k-1}\oplus \left [ \mathrm{d}\0H_{h}\Lambda^{k-1}\right ]^{\perp}=\mathrm{d}\0H_{h}\Lambda^{k-1}\oplus \mathrm{d}_{h}^{\ast}\0H_{h}\Lambda^{k+1},
$$
where $\mathrm{d}_{h}^{\ast}$ is the $L^{2}$ adjoint operator of $\mathrm{d}: \0H_{h}\Lambda^{k}\mapsto \0H_{h}\Lambda^{k+1}$.

In computational electromagnetism, the electromagnetic fields are usually discretized in the discrete divergence-free edge element space $\left [\mathrm{d}\0H_{h}\Lambda^{0}\right ]^{\perp}$, i.e., in
\begin{align}\label{def:xhc}
{X_h^c}:=\left \{\bm{w}_{h}\in \0H_{h}\Lambda^{1}, (w_{h}, \grad\phi_{h})=0, ~\forall \phi_{h}\in \0H_{h}\Lambda^{0}\right  \},
\end{align}
where $\0H_{h}\Lambda^{1}$ is the N\'{e}d\'{e}lec edge element (first or second kind) and $\0H_{h}\Lambda^{0}$ is the Lagrange finite element with a suitable degree \cite{christiansen2011convergence,Hiptmair.R.2002a,Schotzau.D.2004a,HLMZ18_2581,boffi2010finite}.  The discrete divergence-free condition in $X_{h}^{c}$ reflects the Gauss laws in the Maxwell equations.  In this special case,  generalized Gaffney inequalities have been established in, e.g., \cite{christiansen2011convergence,Schotzau.D.2004a,HLMZ18_2581} for discretizations of nonlinear problems.
For eigenvalue problems,  the discrete compactness of ${X_h^c}$ is established and used for the convergence theory (see, e.g., \cite{Hiptmair.R.2002a,boffi2010finite} and the references therein). 
 The analysis of both the generalized Gaffney inequality and the discrete compactness is based on a  map $\mathcal{H}: X_{h}^{c} \mapsto H_{0}(\curl)\cap H(\div0)$ and its approximation property \cite{Hiptmair.R.2002a}.  This continuous lifting acts as a connection between the discrete and continuous levels and is sometimes referred to as the Hodge mapping.

For some problems in electromagnetism, the divergence-free constraint in $H_{0}(\curl)\cap H(\div0)$ plays a crucial role. Therefore strongly divergence-free Brezzi-Douglas-Marini or Raviart-Thomas finite elements $H^{h}(\div0)$ could be used to approximate electromagnetic fields, see \cite{hu2014stable}. To show the well-posedness of the finite element schemes, a new Hodge mapping is studied in \cite{hu2015structure}, see also  \cite{hu2017magnetic} for another type of boundary conditions. To the best of our knowledge, discrete compactness has not been discussed for $H^{h}(\div0)$.

The purpose of this paper is to prove the generalized Gaffney inequality and the discrete compactness for discrete differential forms on  $s$-regular domains.  This goal is achieved by defining a Hodge mapping for the entire discrete space without (either strong or weak) constraints such as the divergence-free conditions. This new Hodge mapping is a generalization of the classical technique for $X_{h}^{c}$ \cite{Hiptmair.R.2002a} and the result for $H^{h}(\div0)$ \cite{hu2015structure}. 
As we shall see, the results rely on the regularity at the continuous level and the existence of bounded cochain projections \cite{falk2014local}.

The rest of this paper  is organized as follows. In Section \ref{sec:preliminary}, we introduce some notation and preliminary results.  In Section \ref{sec:discreteSIDC} we show the main results. Detailed proofs, including the new Hodge mapping, are postponed to Section \ref{sec:Hodge}. In Section \ref{sec:vector}, we show some applications in the $L^{p}$ estimates of the Hodge Laplacian problems. In Section \ref{sec:conclusion} we give concluding remarks.

\section{Preliminaries}\label{sec:preliminary}

We introduce some notation and preliminary results. For differential forms and exterior derivatives, we follow the convention in
 \cite{Arnold.D;Falk.R;Winther.R.2006a} and refer to \cite{Arnold.D;Falk.R;Winther.R.2006a,lang2012fundamentals} for more details.

 We use $\Lambda^{k}(\Omega)$ to denote the space of smooth differential $k$-forms on $\Omega$.  Let $\star: \Lambda^{k}\mapsto \Lambda^{n-k}$ be the Hodge star operator.  We use $(\cdot, \cdot)$ to denote the $L^{2}$ inner product of $k$-forms (for any nonnegative integer $k$):
 $$
 (u, v):=\int_{\Omega}u\wedge \star v, \quad \forall u, v\in \Lambda^{k}(\Omega).
 $$
We denote the norm by
 $$
 \|u\|^{2}:=(u, u).
 $$
Define the Sobolev spaces of  differential $k$-forms:
$$
L^{2}\Lambda^{k}(\Omega):=\left \{v\in \Lambda^{k}(\Omega): (v, v) <\infty\right \},
$$
and
$$
H\Lambda^{k}(\Omega):=\left \{u\in L^{2}\Lambda^{k}(\Omega): ~\mathrm{d}u\in  L^{2}\Lambda^{k+1}(\Omega)\right \},
$$
where $\mathrm{d}$ is the exterior derivative. Define the $H\Lambda$ inner product and the corresponding norms:
$$
(u, v)_{H\Lambda}:=(u, v)+(\mathrm{d}u, \mathrm{d}v), \quad \|u\|_{H\Lambda}^{2}:=(u, u)_{H\Lambda}.
$$

We use $H^{s}\Lambda^{k}(\Omega)$ and $L^{p}\Lambda^{k}(\Omega)$ to denote the  $H^{s}$ and $L^{p}$ Sobolev spaces of differential forms where $s$ is a positive real number and $1\leq p\leq \infty$ is a positive integer  (c.f. \cite{Arnold.D;Falk.R;Winther.R.2006a}).  The corresponding norms are denoted by $\|\cdot\|_{s}$ and $\|\cdot\|_{0, p}$ respectively.
For $s=0$, we also use $\|\cdot\|_{0}$ to denote the $L^{2}$ norm $\|\cdot\|$.

 The codifferential operator $\delta_{k}: C^{\infty}\Lambda^{k}(\Omega)\mapsto C^{\infty}\Lambda^{k-1}(\Omega)$ is defined by $\star\delta_{k}=(-1)^{k} \mathrm{d}\star$. When there is no possible confusion, we omit the subscript and write $\delta$ for any $k$-form. We similarly define
$$
H^{\ast}\Lambda^{k}(\Omega):=\left \{u\in L^{2}\Lambda^{k}(\Omega): ~\delta u\in  L^{2}\Lambda^{k-1}(\Omega)\right \}.
$$
Define the norm
$$
\|{w}\|_{Z}^{2}:=\|{w}\|^{2}+\left \|\mathrm{d}{w}\right \|^{2}+\left \|\delta {w}\right \|^{2}, \quad\forall {w}\in Z^{k}.
$$

We use the notation $u\lesssim v$ to denote $u\leq Cv$, where $C$ is a generic positive constant.

For  $0\leq s\leq 1$, a domain $\Omega$ is called {\it $s$-regular}, if   for any $z\in {Z}^{k}(\Omega)$, the following estimate holds:
\begin{align}\label{s-regularity}
\|z\|_{s}^{2}\lesssim \|\mathrm{d}z\|^{2}+\|\delta z\|^{2}.
\end{align}
We refer to \cite{Arnold.D;Falk.R;Winther.R.2006a,jakab2009on} with the references therein for more details on  $s$-regular domains in $\mathbb{R}^n$ and \cite{mitrea2001layer} for manifolds. Particularly, any Lipschitz domain is an $s$-regular domain for $s\geq 1/2$ \cite{mitrea2001layer}. For any polyhedron in $\mathbb{R}^{3}$ we can choose $s\in (1/2, 1]$ \cite{amrouche1998vector} and for convex domains we can choose $s=1$. 

We assume that $\Omega$ is an $s$-regular domain. For ease of presentation, we further assume that all Betti numbers except for the zeroth vanish, meaning that the de Rham complex on $\Omega$ has trivial cohomology. Therefore there are no nontrivial harmonic forms.  


Let $\mathrm{tr}$ be the trace operator.
We use  $\0 H\Lambda^{k}(\Omega), 0\leq k\leq n-1$ to denote the space of differential $k$-forms with vanishing traces on $\partial \Omega$.  For $n$-forms in $n$ space dimensions, we formally define
$$
\0H\Lambda^{n}(\Omega):=\left \{q\in H\Lambda^{n}(\Omega): ~\int_{\Omega}q=0\right \}.
$$
 We also define
$$
\0 H^{\ast}\Lambda^{k}(\Omega):=\left \{u\in H^{\ast}\Lambda^{k}(\Omega): \mathrm{tr}\star u=0 \right \},\quad 1\leq k\leq n,
$$
$$
\0 H^{\ast}\Lambda^{0}(\Omega):=\left \{u\in H^{\ast}\Lambda^{0}(\Omega): ~\int_{\Omega}\star u=0\right  \},
$$
and define the spaces with vanishing exterior derivatives and coderivatives:
$$
H\Lambda^{k}(0, \Omega):=\left \{u\in H\Lambda^{k}(\Omega): \mathrm{d} u=0 \right \}, \quad \mbox{ and  } \quad
H^{\ast}\Lambda^{k}(0, \Omega):=\left \{u\in H^{\ast}\Lambda^{k}(\Omega):  \delta u=0 \right \}.
$$

The de Rham complex
\begin{equation}\label{deRham}
\begin{diagram}
0& \rTo & \mathbb{R} & \rTo^{\subset} &H \Lambda^{0}(\Omega) & \rTo^{\mathrm{d}} &H \Lambda^{1}(\Omega) & \rTo^{\mathrm{d}} & \cdots &  \rTo^{\mathrm{d}}&H \Lambda^{n}(\Omega)  & \rTo^{}& 0,
\end{diagram}
\end{equation}
is exact on  $\Omega$ with trivial cohomology, i.e. for any $u\in H\Lambda^{k}(\Omega)$ satisfying $\mathrm{d}u=0$, there exists $w\in H\Lambda^{k-1}(\Omega)$ such that $u=\mathrm{d}w$.  Similarly, the spaces with vanishing traces
\begin{equation}\label{deRham0}
\begin{diagram}
0 & \rTo^{} &\0H \Lambda^{0}(\Omega) & \rTo^{\mathrm{d}} & \0 H\Lambda^{1}(\Omega) & \rTo^{\mathrm{d}} & \cdots &  \rTo^{\mathrm{d}}& \0 H\Lambda^{n}(\Omega)  & \rTo^{}& 0,
\end{diagram}
\end{equation}
and the $L^{2}$ dual complex of  \eqref{deRham}
\begin{equation}\label{deRham0-delta}
\begin{diagram}
0 & \lTo^{} & \0H^{\ast}\Lambda^{0}(\Omega) & \lTo^{\delta} & \0H^{\ast}\Lambda^{1}(\Omega) & \lTo^{\delta} & \cdots &  \lTo^{\delta}& \0H^{\ast}\Lambda^{n}(\Omega)  & \lTo^{}& 0,
\end{diagram}
\end{equation}
are also exact sequences.

We assume that the sequence
\begin{equation}\label{complex-Vh}
\begin{diagram}
0& \rTo & \mathbb{R} & \rTo^{\subset} & H_{h}\Lambda^{0} & \rTo^{\mathrm{d}} &H_{h}\Lambda^{1} & \rTo^{\mathrm{d}} & \cdots &  \rTo^{\mathrm{d}}& H_{h}\Lambda^{n}  & \rTo^{}& 0,
\end{diagram}
\end{equation}
and the sequence with vanishing traces:
\begin{equation}\label{complex-Vh0}
\begin{diagram}
0& \rTo & \0H_{h}\Lambda^{0} & \rTo^{\mathrm{d}} &\0H_{h}\Lambda^{1} & \rTo^{\mathrm{d}} & \cdots &  \rTo^{\mathrm{d}}& H_{h}\Lambda^{n}/\mathbb{R}  & \rTo^{}& 0,
\end{diagram}
\end{equation}
are subcomplexes of \eqref{deRham}, i.e., $\0H_{h}\Lambda^{k}\subset H_{h}\Lambda^{k}\subset H\Lambda^{k}(\Omega)$, $\forall \, 0\leq k\leq n$, and each space has finite dimensions. Here $H_{h}\Lambda^{n}/\mathbb{R} $, also denoted as $\0 H_{h}\Lambda^{n}$, is the space of the discrete $n$-forms with vanishing integral. 
 Examples of \eqref{complex-Vh} include the finite element spaces in the Finite Element Periodic Table \cite{arnold2014periodic} with suitable order, e.g. the Lagrange $H^{1}$ elements, the 1st or the 2nd N\'{e}d\'{e}lec $H(\curl)$ elements and the Raviart-Thomas or the Brezzi-Douglas-Marini $H(\div)$ elements. For these finite elements, the  existence of Fortin operators implies that both \eqref{complex-Vh} and \eqref{complex-Vh0} are exact on domains with trivial cohomology.

 We use $\Pi_{k}: \0H\Lambda^{k}(\Omega)\mapsto \0H_{h}\Lambda^{k}$ to denote the interpolation operator for $k$-forms.  The construction of the interpolation operators for the finite element de Rham complexes can be found in e.g., \cite{falk2014local,christiansen2011topics,Arnold.D;Falk.R;Winther.R.2006a,schoberl2001commuting}. These interpolations commute with the exterior derivatives, i.e. $\mathrm{d}_{k}\Pi_{k}=\Pi_{k+1}\mathrm{d}_{k}$, where $\mathrm{d}_{k}$ is the exterior derivative for $k$-forms. Moreover, these operators are bounded with respect to both $L^{2}$ and $H\Lambda^{k}$ norms.  Below we assume that the interpolations $\Pi_{k}$, $k=0, 1, \cdots, n,$ are $L^{p}$-$L^{p}$ bounded \cite{christiansen2011topics,ern2016mollification}, i.e., there exists a generic positive constant $C$ such that
$$
\left \|\Pi_{k}{u}\right \|_{0, p}\leq C\|{u}\|_{0, p}, \quad \forall {u}\in L^{p}\Lambda^{k}(\Omega)\cap\0 H\Lambda^{k}(\Omega).
$$ The commutativity and the boundedness will be crucial in the sequel.

For \eqref{complex-Vh}, we define $\mathrm{d}_{h}^{\ast}: H_{h}\Lambda^{k}\mapsto H_{h}\Lambda^{k-1}$ as the $L^{2}$ dual of the exterior derivatives in \eqref{complex-Vh}, i.e., for any nonnegative integer $k$,
\begin{align}\label{def:Dstar}
(\mathrm{d}_{h}^{\ast}u_{h}, v_{h})=(u_{h}, \mathrm{d}v_{h}), \quad \forall v_{h}\in  H_h\Lambda^{k-1} .
\end{align}
Correspondingly, for \eqref{complex-Vh0} we define $\mathrm{d}_{h}^{\ast}: \0H_{h}\Lambda^{k}\mapsto \0H_{h}\Lambda^{k-1}$ by
\begin{align}\label{def:Dstar0}
(\mathrm{d}_{h}^{\ast}u_{h}, v_{h})=(u_{h}, \mathrm{d}v_{h}), \quad \forall v_{h}\in  \0H_h\Lambda^{k-1} .
\end{align}
Since $(\cdot, \cdot)$ is a complete inner product on finite dimensional spaces, the identity \eqref{def:Dstar} or \eqref{def:Dstar0} uniquely defines $\mathrm{d}_{h}^{\ast}$.
By definition we  have (for either \eqref{complex-Vh} or \eqref{complex-Vh0})
$$
\left ( \mathrm{d}_{h}^{\ast}\mathrm{d}_{h}^{\ast}u_{h}, v_{h}\right )=\left ( \mathrm{d}_{h}^{\ast}u_{h}, \mathrm{d}v_{h}\right )=\left (u_{h},  \mathrm{d}\mathrm{d}v_{h}\right )=0, \quad\forall u_{h}, v_{h}.
$$
Therefore we have               
$$
\left ( \mathrm{d}_{h}^{\ast}\right )^{2}=0,
$$
which mimics the identity $\delta^{2}=0$ at the continuous level. In this way we obtain the complexes
\begin{equation*}
\begin{diagram}
0& \lTo & \mathbb{R} & \lTo^{} & H_{h}\Lambda^{0} & \lTo^{\mathrm{d}_{h}^{\ast}} &H_{h}\Lambda^{1} & \lTo^{\mathrm{d}_{h}^{\ast}} & \cdots &  \lTo^{\mathrm{d}_{h}^{\ast}}& H_{h}\Lambda^{n}  & \lTo^{}& 0,
\end{diagram}
\end{equation*}
and
\begin{equation*}
\begin{diagram}
0& \lTo & \0H_{h}\Lambda^{0} &\lTo^{\mathrm{d}_{h}^{\ast}} &\0H_{h}\Lambda^{1} & \lTo^{\mathrm{d}_{h}^{\ast}} & \cdots &  \lTo^{\mathrm{d}_{h}^{\ast}}& H_{h}\Lambda^{n}/\mathbb{R}  & \rTo^{}& 0.
\end{diagram}
\end{equation*}

We define the range
$$
\0\mathfrak{B}_{h}^{k}:=\mathrm{d}\0H_{h}\Lambda^{k-1}(\Omega).
$$
Since we assume that $\Omega$ has trivial cohomology, the range is identical to the kernel space
$$\0\mathfrak{B}_{h}^{k}=\0\mathfrak{Z}_{h}^{k}:=\{u_{h}\in  \0H_h\Lambda^{k} : \mathrm{d}u_{h}=0\}.$$

For the discrete $L^{2}$ adjoint operators, we define $\0\mathfrak{B}^{\ast}_{k, h}:=\mathrm{d}_{h}^{\ast}\0H_{h}\Lambda^{k+1}$. 
For $u_{h}\in \0\mathfrak{B}^{\ast}_{k, h}$ and $w_{h}\in \0\mathfrak{Z}_{h}^{k}$, we have
$$
(u_{h}, w_{h})=(\mathrm{d}_{h}^{\ast}\phi_{h}, w_{h})=(\phi_{h}, \mathrm{d}w_{h})=0.
$$
Therefore $ \0\mathfrak{B}^{\ast}_{k, h}\perp \0 \mathfrak{Z}_{h}^{k}$. The orthogonality can be understood either with respect to the inner product $(\cdot, \cdot)$ or  with respect to $(\cdot, \cdot)_{H\Lambda}$.

The discrete Hodge decomposition holds:
\begin{align}\label{discrete-Hodge-1}
 H_h\Lambda^{k} ={\mathfrak{B}}_{h}^{k} \oplus \mathfrak{B}^{\ast}_{k, h}.
\end{align}
Analogously, we can decompose $\0 H_{h}\Lambda^{k}$  with vanishing boundary conditions:
\begin{align}\label{discrete-Hodge-2}
\0 H_{h}\Lambda^{k}=\mathrm{d}\0 H_{h}\Lambda^{k-1}\oplus \mathrm{d}_{h}^{\ast}\0 H_{h}^{\ast}\Lambda^{k+1}.
\end{align}

\section{Main results}\label{sec:discreteSIDC}

The generalized Gaffney inequality and the discrete compactness below are based on a key result:
\begin{lemma}[generalized Hodge mapping]\label{lem:hodge-mapping}
Let $\Omega$ be an $s$-regular domain. There exists a map $\mathcal{H}^{k}: \0 H_{h}\Lambda^{k}(\Omega)\mapsto Z^{k}$ such that 
$$
\|{u}_{h}-{\mathcal{H}^k}{u}_{h}\|\lesssim h^{s}\left (\|\mathrm{d}{u}_{h}\|+  \|\mathrm{d}_{h}^{\ast}{u}_{h}\|   \right ), \quad \forall{u}_{h}\in \0H_{h}\Lambda^{k}.
$$
\end{lemma}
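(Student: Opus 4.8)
The plan is to construct the map $\mathcal H^k$ through a continuous variational problem on $Z^k$ that reproduces the exterior derivative and the \emph{discrete} codifferential of $u_h$, and then to bound the defect $u_h-\mathcal H^k u_h$ by a ``discrete integration by parts'' that converts the nonconformity into interpolation errors; this generalizes the classical Hodge mapping for $X_h^c$ \cite{Hiptmair.R.2002a} and for $H^h(\mathrm{div}\,0)$ \cite{hu2015structure}.

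\emph{Construction and structure of the lift.} For $u_h\in\0H_h\Lambda^k$ let $z=\mathcal H^k u_h\in Z^k$ be the solution of
\begin{equation*}
(\mathrm dz,\mathrm d\tau)+(\delta z,\delta\tau)=(\mathrm du_h,\mathrm d\tau)+(\mathrm d_h^{\ast}u_h,\delta\tau),\qquad\forall\,\tau\in Z^k.
\end{equation*}
Since the cohomology is trivial, \eqref{s-regularity} gives $\|\tau\|_Z^2\lesssim\|\mathrm d\tau\|^2+\|\delta\tau\|^2$, so the bilinear form is coercive on $Z^k$; the right-hand side is bounded by $(\|\mathrm du_h\|+\|\mathrm d_h^{\ast}u_h\|)\|\tau\|_Z$, so Lax--Milgram gives a unique $z$ with $\|z\|_Z\lesssim\|\mathrm du_h\|+\|\mathrm d_h^{\ast}u_h\|$ and, again by \eqref{s-regularity}, $\|z\|_s\lesssim\|\mathrm du_h\|+\|\mathrm d_h^{\ast}u_h\|$. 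Testing against co-closed $\tau$ (which, by exactness of \eqref{deRham0}, exhaust the exact forms) yields $\mathrm dz=\mathrm du_h$; testing against closed $\tau$ (which, by exactness of \eqref{deRham0-delta}, exhaust the co-exact forms) yields that $\mathrm d_h^{\ast}u_h-\delta z$ is $L^2$-orthogonal to every co-closed $(k-1)$-form with vanishing trace.

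\emph{Defect estimate.} Because $\mathrm d(u_h-z)=0$ and $u_h-z$ has vanishing trace, \eqref{deRham0} gives $u_h-z=\mathrm d\alpha$ with $\alpha$ co-closed and of vanishing trace, so $\alpha\in Z^{k-1}$ and, by \eqref{s-regularity}, $\|\alpha\|_s\lesssim\|u_h-z\|$. Expand $\|u_h-z\|^2=(u_h,\mathrm d\alpha)-(z,\mathrm d\alpha)$; in the second term integrate by parts, $(z,\mathrm d\alpha)=(\delta z,\alpha)$ (using $\tr\alpha=0$ and $z\in H^{\ast}\Lambda^k$); in the first term insert the commuting bounded cochain projection $\Pi_{k-1}\alpha$ and use $\mathrm d\Pi_{k-1}=\Pi_k\mathrm d$, the definition \eqref{def:Dstar0} of $\mathrm d_h^{\ast}$, and $\Pi_k u_h=u_h$ to rewrite $\mathrm d\alpha-\Pi_k\mathrm d\alpha=-(I-\Pi_k)z$. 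This yields an identity of the form
\begin{equation*}
\|u_h-z\|^2=(\mathrm d_h^{\ast}u_h-\delta z,\alpha)+\big(\mathrm d_h^{\ast}u_h,(\Pi_{k-1}-I)\alpha\big)-\big(u_h,(I-\Pi_k)z\big).
\end{equation*}
The first term vanishes by the orthogonality just established; the second is $\lesssim h^s\|\mathrm d_h^{\ast}u_h\|\,\|\alpha\|_s\lesssim h^s\|\mathrm d_h^{\ast}u_h\|\,\|u_h-z\|$ and is absorbed; the third, after splitting $u_h=(u_h-z)+z$, is controlled using the interpolation error $\|(I-\Pi_k)z\|\lesssim h^s\|z\|_s$, the regularity bound on $z$, and the discrete Poincar\'e inequality (a consequence of the bounded cochain projections), giving the claimed $h^s(\|\mathrm du_h\|+\|\mathrm d_h^{\ast}u_h\|)$ after absorbing the $\|u_h-z\|$ factor.

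\emph{Main obstacle.} The essential difficulty is exactly the nonconformity emphasized in the introduction: $u_h\notin H^{\ast}\Lambda^k$, so $\delta$ cannot be applied to $u_h$ and every occurrence of the codifferential has to be replaced by its discrete surrogate $\mathrm d_h^{\ast}$. Carrying out the integration by parts through the commuting projection is routine and the orthogonality relation for $\mathrm d_h^{\ast}u_h-\delta z$ kills the leading contribution; the delicate point is to estimate the residual interpolation-error term $(u_h,(I-\Pi_k)z)$ at the \emph{sharp} rate $h^s$ (a crude bound $\|u_h\|\,h^s\|z\|_s$ only yields $h^{s/2}$ after absorption). Recovering the full power requires exploiting that $z$ is itself a lift of finite element data together with the discrete Poincar\'e inequality, so that this term too couples back to $\|u_h-z\|$ rather than standing alone as $h^s\|z\|_s^2$; this is precisely the place where the $s$-regularity of $\Omega$ and the existence of bounded cochain projections are used essentially.
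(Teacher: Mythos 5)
Your construction of the lift is essentially the paper's: the variational problem you pose on $Z^{k}$ is equivalent to the definition \eqref{def:Hk}, and the well-posedness and stability arguments are the same. The defect estimate, however, has a genuine gap, and it sits exactly where you flag it. In your identity the term $\bigl(u_h,(I-\Pi_k)z\bigr)$ splits as $\bigl(u_h-z,(I-\Pi_k)z\bigr)+\bigl(z,(I-\Pi_k)z\bigr)$; the first piece absorbs, but the second is a pairing of $z$ with its interpolation error, and since $\Pi_k$ is a commuting cochain projection rather than the $L^{2}$ projection there is no orthogonality to exploit: the generic bound is $\|z\|\,h^{s}\|z\|_{s}=O(h^{s})(\|\mathrm{d}u_h\|+\|\mathrm{d}_h^{\ast}u_h\|)^{2}$, one full power of $h^{s}$ short of what $\|u_h-z\|^{2}\lesssim h^{2s}(\cdots)^{2}$ requires. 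Your proposed remedy (``$z$ is itself a lift of finite element data together with the discrete Poincar\'e inequality'') is not carried out, and it does not couple this term back to $\|u_h-z\|$: writing $z=u_h-(u_h-z)$ inside it is circular, and the discrete Poincar\'e inequality only controls $\|u_h\|$, which reproduces the crude $h^{s/2}$ rate you already identified as insufficient.

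The paper closes this by a structurally different decomposition that never produces the term $\bigl(z,(I-\Pi_k)z\bigr)$. Instead of taking a continuous potential $\alpha$ of $u_h-\mathcal{H}^{k}u_h$ and projecting it, one takes a \emph{discrete} potential: since $\mathrm{d}(u_h-\Pi_k\mathcal{H}^{k}u_h)=0$ by commutativity, there is $\phi_h\in\0\mathfrak{B}^{\ast}_{k-1,h}$ with $u_h-\Pi_k\mathcal{H}^{k}u_h=\mathrm{d}\phi_h$, so the pairing $(u_h,\mathrm{d}\phi_h)=(\mathrm{d}_h^{\ast}u_h,\phi_h)$ is \emph{exact}, with no interpolation error on $z$ entering at this stage. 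To compare $(\mathrm{d}_h^{\ast}u_h,\phi_h)$ with $(\delta z,\cdot)$ one then needs the auxiliary Hodge mapping $\mathcal{H}_0^{k-1}$ for the weakly constrained space and its approximation property $\|\phi_h-\mathcal{H}_0^{k-1}\phi_h\|\lesssim h^{s}\|\mathrm{d}\phi_h\|$ (Theorem \ref{thm:part-hodge}); the only residual terms are then $(\mathrm{d}_h^{\ast}u_h,\phi_h-\mathcal{H}_0^{k-1}\phi_h)$ and $(u_h-\mathcal{H}^{k}u_h,\Pi_k\mathcal{H}^{k}u_h-\mathcal{H}^{k}u_h)$, each carrying a clean factor $h^{s}$ against a quantity that absorbs. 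This two-level construction --- first the Hodge mapping for the weakly constrained space, then the discrete potential trick --- is the missing ingredient in your argument.
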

We postpone the proof of this technical result to Section \ref{sec:Hodge}.

Based on Lemma \ref{lem:hodge-mapping}, we establish the generalized Gaffney inequality.
\begin{theorem}[generalized Gaffney inequality]\label{thm:discrete-sobolev}
Assume that $\Omega$ is an $s$-regular domain.  We have
$$
\|{u}_{h}\|_{0,p}\lesssim \|\mathrm{d}{u}_h\|+ \|\mathrm{d}_{h}^{\ast}{u}_{h}\|,\quad \forall {u}_{h}\in \0 H_{h}\Lambda^{k}(\Omega),
$$
where $p=2n/(n-2s)$ and $n$ is the space dimension.
\end{theorem}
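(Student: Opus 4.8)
The plan is to combine the generalized Hodge mapping of Lemma~\ref{lem:hodge-mapping} with the $s$-regularity of $\Omega$ and the Sobolev embedding $H^{s}(\Omega)\hookrightarrow L^{p}(\Omega)$ for $p=2n/(n-2s)$, together with the $L^{p}$-boundedness of the interpolation operator $\Pi_{k}$ and an inverse inequality to control the piece of $u_{h}$ that is not captured by the continuous lifting. Fix $u_{h}\in\0H_{h}\Lambda^{k}(\Omega)$ and write $z:=\mathcal{H}^{k}u_{h}\in Z^{k}$. By the triangle inequality,
\begin{align*}
\|u_{h}\|_{0,p}\leq \|\mathcal{H}^{k}u_{h}\|_{0,p}+\|u_{h}-\mathcal{H}^{k}u_{h}\|_{0,p}.
\end{align*}
For the first term I would use $s$-regularity \eqref{s-regularity}, which gives $\|z\|_{s}^{2}\lesssim\|\mathrm{d}z\|^{2}+\|\delta z\|^{2}$, and then the Sobolev embedding $H^{s}\Lambda^{k}(\Omega)\hookrightarrow L^{p}\Lambda^{k}(\Omega)$ with exactly this exponent $p=2n/(n-2s)$, so that $\|z\|_{0,p}\lesssim\|z\|_{s}\lesssim\|\mathrm{d}z\|+\|\delta z\|$. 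It then remains to bound $\|\mathrm{d}z\|$ and $\|\delta z\|$ by $\|\mathrm{d}u_{h}\|+\|\mathrm{d}_{h}^{\ast}u_{h}\|$; this is a property one expects $\mathcal{H}^{k}$ to satisfy by construction (it should reproduce the exterior derivative and control the codifferential), so I would either read it off from the construction in Section~\ref{sec:Hodge} or state it as part of the Hodge-mapping lemma. In particular $\mathrm{d}\mathcal{H}^{k}u_{h}$ should equal (or be $L^{2}$-close to) $\mathrm{d}u_{h}$, and $\|\delta\mathcal{H}^{k}u_{h}\|\lesssim\|\mathrm{d}_{h}^{\ast}u_{h}\|$.

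For the second, nonconforming term I would pass through the interpolation operator. Since $\Pi_{k}u_{h}=u_{h}$ (the interpolant is a projection onto $\0H_{h}\Lambda^{k}$), we have
\begin{align*}
\|u_{h}-\mathcal{H}^{k}u_{h}\|_{0,p}
=\|\Pi_{k}(u_{h}-\mathcal{H}^{k}u_{h})+(\Pi_{k}-I)\mathcal{H}^{k}u_{h}\|_{0,p}.
\end{align*}
Here there is a subtlety: $u_{h}-\mathcal{H}^{k}u_{h}$ need not lie in $L^{p}$ a priori (only the $L^{2}$ estimate of Lemma~\ref{lem:hodge-mapping} is available), so I would instead apply the $L^{p}$ bound to $\Pi_{k}\mathcal{H}^{k}u_{h}$ and write $u_{h}-\mathcal{H}^{k}u_{h}=\bigl(\Pi_{k}\mathcal{H}^{k}u_{h}-\mathcal{H}^{k}u_{h}\bigr)+\bigl(u_{h}-\Pi_{k}\mathcal{H}^{k}u_{h}\bigr)$, where the last term is a finite element function. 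On that finite element term I would use the inverse estimate $\|v_{h}\|_{0,p}\lesssim h^{n/p-n/2}\|v_{h}\|$ together with the $L^{2}$ estimate $\|u_{h}-\Pi_{k}\mathcal{H}^{k}u_{h}\|\leq\|u_{h}-\mathcal{H}^{k}u_{h}\|+\|(\Pi_{k}-I)\mathcal{H}^{k}u_{h}\|\lesssim h^{s}(\|\mathrm{d}u_{h}\|+\|\mathrm{d}_{h}^{\ast}u_{h}\|)$, the second summand being bounded by the approximation property of $\Pi_{k}$ applied to $\mathcal{H}^{k}u_{h}\in H^{s}$. Since $n/p-n/2=-s$ for our choice of $p$, the factor $h^{s}$ cancels the negative power $h^{-s}$ from the inverse inequality, leaving a bound by $\|\mathrm{d}u_{h}\|+\|\mathrm{d}_{h}^{\ast}u_{h}\|$, uniformly in $h$. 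The remaining term $\|(\Pi_{k}-I)\mathcal{H}^{k}u_{h}\|_{0,p}$ is handled by $L^{p}$-stability of $\Pi_{k}$ plus $\|\mathcal{H}^{k}u_{h}\|_{0,p}$, already controlled above.

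The main obstacle, as I see it, is the interplay between the two norms in this scaling argument: Lemma~\ref{lem:hodge-mapping} only delivers an $L^{2}$ error, and one must exploit the precise algebraic identity $n/p-n/2+s=0$ so that the inverse inequality exactly consumes the $h^{s}$ gain—there is no slack, so the argument is tight in $p$. A secondary point requiring care is that $\mathcal{H}^{k}u_{h}$ must genuinely land in $H^{s}\Lambda^{k}$ with $\|\mathcal{H}^{k}u_{h}\|_{s}\lesssim\|\mathrm{d}u_{h}\|+\|\mathrm{d}_{h}^{\ast}u_{h}\|$; this should follow from $\mathcal{H}^{k}u_{h}\in Z^{k}$, $s$-regularity, and the boundedness of $\mathrm{d}\mathcal{H}^{k}u_{h}$ and $\delta\mathcal{H}^{k}u_{h}$, but it relies on those last two bounds being available from the construction in Section~\ref{sec:Hodge}, so I would make sure to extract them there (rather than only the $L^{2}$-closeness stated in the lemma). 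Assuming those ingredients, the estimate assembles cleanly and is uniform in $h$.
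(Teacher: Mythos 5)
Your proposal is correct and follows essentially the same route as the paper: the paper also splits off $\Pi_{k}\mathcal{H}^{k}u_{h}$, applies the inverse inequality to the finite element function $u_{h}-\Pi_{k}\mathcal{H}^{k}u_{h}$ so that $h^{-(n/2-n/p)}$ cancels the $h^{s}$ gain from Lemma \ref{lem:hodge-mapping} and the interpolation error, and controls $\|\Pi_{k}\mathcal{H}^{k}u_{h}\|_{0,p}$ via the $L^{p}$-stability of $\Pi_{k}$, $s$-regularity, and the Sobolev embedding. The auxiliary facts you flag as needed ($\mathrm{d}\mathcal{H}^{k}u_{h}=\mathrm{d}u_{h}$ and $\|\delta\mathcal{H}^{k}u_{h}\|\leq\|\mathrm{d}_{h}^{\ast}u_{h}\|$) are indeed established in the construction of $\mathcal{H}^{k}$ in Section \ref{sec:Hodge}.
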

For $n=3, s=1/2$, we have $p=3$ and for $n=3, s=1$, we have $p=6$.
\begin{proof}
From the triangular inequality,  we have
$$
\|{u}_{h}\|_{0,p}\leq \|{u}_{h}-\Pi_{k}{\mathcal{H}^k}{{u}_{h}}\|_{0,p}+\|\Pi_{k}{\mathcal{H}^k}{{u}_{h}}\|_{0,p}.
$$
From the inverse estimates, the interpolation error estimates and the approximation of the Hodge mapping (Lemma \ref{lem:hodge-mapping}),
\begin{align*}
\|{u}_{h}-\Pi_{k}{\mathcal{H}^k}{{u}_{h}}\|_{0,p}&\lesssim h^{-\left ( \frac{n}{2}-\frac{n}{p}\right )}\|{u}_{h}-\Pi_{k}{\mathcal{H}^k}{{u}_{h}}\|\\&
\lesssim h^{-\left ( \frac{n}{2}-\frac{n}{p}\right )}( \|{u}_{h}-{\mathcal{H}^k}{{u}_{h}}\|+\|{\mathcal{H}^k}{{u}}-\Pi_{k}{\mathcal{H}^k}{{u}_{h}}\|   )\\&
\lesssim h^{-\left ( \frac{n}{2}-\frac{n}{p}\right )}h^{s}\left ( \|\mathrm{d}{u}_{h}\|+\|\mathrm{d}_{h}^{\ast}{u}_{h}\|\right )\\&
\lesssim \|\mathrm{d}{u}_{h}\|+\|\mathrm{d}_{h}^{\ast}{u}_{h}\|.
\end{align*}
From the $L^{p}$ boundedness of the interpolation operators and the regularity of ${Z}^{k}$, we have
$$
\|\Pi_{k}{\mathcal{H}^k}{{u}_{h}}\|_{0, p}\lesssim \|{\mathcal{H}^k}{{u}_{h}}\|_{0, p}\lesssim \|\mathrm{d}{\mathcal{H}^k}{u}_{h}\| +\|\delta {\mathcal{H}^k}{u}_{h}\|\|\leq \|\mathrm{d}{u}_{h}\|+ \|\mathrm{d}_{h}^{\ast} {u}_{h}\|.
$$
This completes the proof.
\end{proof}

Let $\mathscr{H}  = \{h_n : n =1,2, \cdots\}$ be a sequence of decreasing positive real numbers converging to zero and $\left \{\mathcal{T}_{h}\right \}_{h\in \mathscr{H} }$ be a family of shape-regular meshes on $\Omega$.
\begin{theorem}[discrete compactness]\label{thm:discrete-compactness}
Given a sequence ${u}_{h}\in \0H_{h}\Lambda^{k}( \Omega), ~h\in \mathscr{H}$ satisfying $\|\mathrm{d} {u}_{h}\|+\|\mathrm{d}_{h}^{\ast} {u}_{h}\|\leq C$, where $C$ is a positive constant, there exists a subsequence ${u}_{h_{n}}$ which converges strongly in $L^{2}\Lambda^{k}(\Omega)$.
\end{theorem}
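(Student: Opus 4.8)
The plan is to reduce the discrete compactness to the compactness of the embedding $Z^{k}\hookrightarrow L^{2}\Lambda^{k}(\Omega)$ at the continuous level, using the generalized Hodge mapping of Lemma \ref{lem:hodge-mapping} as the bridge between the discrete and the continuous spaces. Set $M:=\sup_{h\in\mathscr{H}}\left(\|\mathrm{d}u_{h}\|+\|\mathrm{d}_{h}^{\ast}u_{h}\|\right)\le C$ and put $w_{h}:=\mathcal{H}^{k}u_{h}\in Z^{k}$. The construction of $\mathcal{H}^{k}$ (Section \ref{sec:Hodge}) together with Lemma \ref{lem:hodge-mapping} provides both the approximation estimate $\|u_{h}-w_{h}\|\lesssim h^{s}M$ and the stability bound $\|\mathrm{d}w_{h}\|+\|\delta w_{h}\|\lesssim M$ — the latter being exactly the stability already used in the proof of Theorem \ref{thm:discrete-sobolev}. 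Combining the stability bound with the $s$-regularity estimate \eqref{s-regularity} (valid as a full $H^{s}$ bound here because $\Omega$ has trivial cohomology, so there are no nontrivial harmonic forms) yields $\|w_{h}\|_{s}\lesssim\|\mathrm{d}w_{h}\|+\|\delta w_{h}\|\lesssim M$, i.e. $\{w_{h}\}_{h\in\mathscr{H}}$ is a bounded subset of $H^{s}\Lambda^{k}(\Omega)$. In particular $\|u_{h}\|\le\|u_{h}-w_{h}\|+\|w_{h}\|\lesssim M$, so the original sequence is $L^{2}$-bounded.

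Next, since $\Omega$ is an $s$-regular (in particular Lipschitz) bounded domain and $s\ge 1/2>0$, the Rellich--Kondrachov theorem gives that $H^{s}\Lambda^{k}(\Omega)\hookrightarrow L^{2}\Lambda^{k}(\Omega)$ is a compact embedding. Hence from the bounded family $\{w_{h}\}$ I can extract a subsequence $w_{h_{n}}$ converging strongly in $L^{2}\Lambda^{k}(\Omega)$ to some $w\in L^{2}\Lambda^{k}(\Omega)$. Passing to this subsequence, the approximation estimate gives $\|u_{h_{n}}-w_{h_{n}}\|\lesssim h_{n}^{s}M\to 0$, so that
$$
\|u_{h_{n}}-w\|\le\|u_{h_{n}}-w_{h_{n}}\|+\|w_{h_{n}}-w\|\longrightarrow 0,
$$
that is, $u_{h_{n}}\to w$ strongly in $L^{2}\Lambda^{k}(\Omega)$, which is the assertion of the theorem.

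The genuinely difficult content is packaged into Lemma \ref{lem:hodge-mapping} and the construction of $\mathcal{H}^{k}$; granting it, the argument above is routine. Within the present argument, the one point deserving care is the uniform $H^{s}$-bound on $w_{h}=\mathcal{H}^{k}u_{h}$: it simultaneously uses the $h^{s}$ approximation estimate, the $L^{2}$-stability $\|\mathrm{d}\mathcal{H}^{k}u_{h}\|+\|\delta\mathcal{H}^{k}u_{h}\|\lesssim\|\mathrm{d}u_{h}\|+\|\mathrm{d}_{h}^{\ast}u_{h}\|$ of the Hodge map, and the triviality of the cohomology (so that \eqref{s-regularity} controls the full $H^{s}$ norm, not merely a seminorm). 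I would therefore state and use the stability property of $\mathcal{H}^{k}$ explicitly alongside Lemma \ref{lem:hodge-mapping} before carrying out the extraction.
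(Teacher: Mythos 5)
Your proposal is correct and follows essentially the same route as the paper: map $u_h$ into $Z^k$ via the generalized Hodge mapping, use the stability $\|\mathrm{d}\mathcal{H}^k u_h\|+\|\delta\mathcal{H}^k u_h\|\le \|\mathrm{d}u_h\|+\|\mathrm{d}_h^\ast u_h\|$ together with $s$-regularity to get a compactly embedded bounded family, extract an $L^2$-convergent subsequence, and transfer it back with the $h^s$ approximation estimate and the triangle inequality. The only cosmetic difference is that you spell out the compactness of $Z^k\hookrightarrow L^2\Lambda^k(\Omega)$ via the uniform $H^s$ bound and Rellich--Kondrachov, whereas the paper invokes it directly.
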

\begin{proof}
From the regularity of ${Z}^{k}$  \eqref{s-regularity} and the definition of ${\mathcal{H}^k}$,  we have
$$
\|{\mathcal{H}^k}{{u}_{h}}\|_{Z}\lesssim \|\mathrm{d} {\mathcal{H}^k}{u}_{h}\|+\|\delta {\mathcal{H}^k}{u}_{h}\|\leq \|\mathrm{d}{u}_{h}\|+\|\mathrm{d}_{h}^{\ast} {u}_{h}\|\leq C.
$$
Since ${Z}^{k}$ is compactly imbedded in $L^{2}(\Omega)$,  there exists a sequence converging strongly in $L^{2}\Lambda^{k}(\Omega)$:
\begin{align}\label{convergence-B0}
{\mathcal{H}^k}{u}_{h_{n}}\rightarrow {{u}}_{0}, \quad \mbox{ as }n\rightarrow \infty.
\end{align}
Next we prove ${u}_{h_{n}}\rightarrow {{u}}_{0}$ strongly in $L^{2}(\Omega)$. In fact, from the triangular inequality:
\begin{align}
\|{u}_{h_{n}}-{{u}}_{0}\|\leq \|{u}_{h_{n}}-{\mathcal{H}^k}{u}_{h_{n}}\|+\|{\mathcal{H}^k}{u}_{h_{n}}-{u}_{0}\|.
\end{align}
Due to the approximation property of the Hodge mapping (Lemma \ref{Hd-approximation}),
$$
\|{u}_{h_{n}}-{\mathcal{H}^k}{u}_{h_{n}}\|\lesssim h^{s}\left ( \|\mathrm{d}{u}_{h_{n}}\|+\|\mathrm{d}_{h}^{\ast} {u}_{h_{n}}\| \right )\lesssim h^{s}\rightarrow 0,
$$
as $n\rightarrow \infty$ (and hence $h_{n}\rightarrow 0$).

Due to \eqref{convergence-B0},
$$
\|{\mathcal{H}^k}{u}_{h_{n}}-{{u}}_{0}\|\rightarrow 0.
$$
This completes the proof.
\end{proof}

Theorem \ref{thm:discrete-sobolev} and Theorem \ref{thm:discrete-compactness} are based on the complexes \eqref{deRham0} and \eqref{complex-Vh0} with vanishing boundary conditions. The same conclusions in Theorem \ref{thm:discrete-sobolev} and Theorem \ref{thm:discrete-compactness}  hold for $H_{h}\Lambda^{k}$ and the proof can be translated verbatim in this case by using the complexes \eqref{deRham} and \eqref{complex-Vh}.

\section{Generalized Hodge mapping}\label{sec:Hodge}

This section is devoted to the proof of Lemma \ref{lem:hodge-mapping}. The proof consists of two steps: first, we generalize the classical Hodge mapping for the edge elements to discrete differential forms with weak constraints ($u_{h}\in \0H_{h}\Lambda^{k}(\Omega)$ satisfying $\mathrm{d}_{h}^{\ast}u_{h}=0$); second, we define a generalized Hodge mapping for the entire space $\0H_{h}\Lambda^{k}(\Omega)$ and prove its approximation properties.

\paragraph{Hodge mapping for weakly constrained spaces}

Let ${Z}_{0}^{k}:=\0 H\Lambda^{k}( \Omega)\cap {H}^{\ast}\Lambda^{k}(0, \Omega)$ be the subspace of $Z^{k}$ with vanishing codifferential.

For discrete differential forms, we define a Hodge mapping  $\mathcal{H}_{0}^{k}: \0\mathfrak{B}_{k, h}^{\ast} \mapsto {Z}_{0}^{k}$:
$$
\mathrm{d}\mathcal{H}_{0}^{k}\phi_{h}=\mathrm{d}\phi_{h}, \quad\forall \phi_{h}\in\0\mathfrak{B}_{k, h}^{\ast}.
$$
The Poincar\'{e} inequality in $Z^{k}_{0}$ (\eqref{s-regularity} with $s=0$) implies that ${\mathcal{H}_{0}^{k}}$ is well-defined. Here ${\mathcal{H}_{0}^{k}}$ is a generalization of the Hodge mapping for the weakly divergence-free edge elements $X_{h}^{c}$ \cite{Hiptmair.R.2002a}.

We then show the approximation property of ${\mathcal{H}^k_{0}}$. The proof is a generalization of the properties of the Hodge mapping for $X_{h}^{c}$ (c.f. \cite[Lemma 4.5]{Hiptmair.R.2002a}).
\begin{theorem}\label{thm:part-hodge}
Assume that $\Omega$ is an  $s$-regular domain where $s\in [1/2, 1]$.  We have
\begin{align}
\|{u}_{h}-{\mathcal{H}^k_{0}}{u}_{h}\|\lesssim h^{s}\|\mathrm{d}{u}_{h}\|, \quad \forall {u}_{h}\in \0 \mathfrak{B}_{k, h}^{\ast}.
\end{align}
\end{theorem}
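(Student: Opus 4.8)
The plan is to mimic the classical argument of Hiptmair \cite[Lemma 4.5]{Hiptmair.R.2002a}, which passes through the continuous Hodge mapping and uses the commuting, bounded interpolation. Fix $u_h \in \0\mathfrak{B}_{k,h}^{\ast}$ and write $z := \mathcal{H}_0^k u_h \in Z_0^k$, so that $\mathrm{d}z = \mathrm{d}u_h$ and $\delta z = 0$. The key identity is that $u_h$ is the $L^2$-orthogonal projection of $z$ onto $\0H_h\Lambda^k$ along the subspace $\mathrm{d}\0H_h\Lambda^{k-1}$; equivalently, $z - u_h \perp \0\mathfrak{B}_{k,h}^{\ast}$, hence $z - u_h \in \mathrm{d}\0H_h\Lambda^{k-1} + (\text{orthogonal complement of }\0H_h\Lambda^k)$. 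So the first step is to verify that $(z-u_h,\, w_h)=0$ for all $w_h\in\0\mathfrak{B}_{k,h}^{\ast}=\mathrm{d}_h^{\ast}\0H_h\Lambda^{k+1}$: indeed $(z,\mathrm{d}_h^{\ast}\psi_h)=(z,\mathrm{d}_h^{\ast}\psi_h)$, and since $\delta z=0$ with the right trace, one gets $(z,\mathrm{d}_h^{\ast}\psi_h)=(\delta z,\ldots)=0$ — actually more carefully $(\mathrm{d}_h^{\ast}\psi_h, z)=(\psi_h,\mathrm{d}z)=(\psi_h,\mathrm{d}u_h)=(\mathrm{d}_h^{\ast}\psi_h,u_h)$, using $\mathrm{d}z=\mathrm{d}u_h$. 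This is the orthogonality that pins down $u_h$ as the best approximation to $z$ within the affine structure.

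Next I would use the bounded commuting interpolation $\Pi_k$. Because $\mathrm{d}(z-u_h)=0$ and the discrete complex is exact, $z - u_h$ decomposes and one estimates $\|z-u_h\|$ by comparing with $\Pi_k z$. Concretely: $z-u_h = (z - \Pi_k z) + (\Pi_k z - u_h)$, and $\Pi_k z - u_h \in \0H_h\Lambda^k$ with $\mathrm{d}(\Pi_k z - u_h)=\Pi_{k+1}\mathrm{d}z - \mathrm{d}u_h = \Pi_{k+1}\mathrm{d}u_h - \mathrm{d}u_h = 0$ (using that $\mathrm{d}u_h\in\0\mathfrak{B}_h^{k+1}$ is discrete and hence fixed by $\Pi_{k+1}$, or more simply that $\mathrm{d}u_h = \mathrm{d}z$ and commutativity gives $\mathrm{d}\Pi_k z = \Pi_{k+1}\mathrm{d}z$). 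Hence $\Pi_k z - u_h \in \0\mathfrak{Z}_h^k = \mathrm{d}\0H_h\Lambda^{k-1}$, which is $L^2$-orthogonal to $\0\mathfrak{B}_{k,h}^{\ast}\ni u_h$; combined with the orthogonality from the first step, $z-u_h \perp \Pi_k z - u_h$ in $L^2$, so $\|z - u_h\| \le \|z - \Pi_k z\|$. Then the standard $L^2$ interpolation error estimate for $Z_0^k$-regular forms, $\|z - \Pi_k z\| \lesssim h^s \|z\|_s$, together with the $s$-regularity bound $\|z\|_s \lesssim \|\mathrm{d}z\| + \|\delta z\| = \|\mathrm{d}u_h\|$, yields the claim.

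The main obstacle is making the orthogonality/best-approximation step rigorous: one must be careful that $\Pi_k z - u_h$ lies in the discrete kernel $\0\mathfrak{Z}_h^k$ (which equals the range $\0\mathfrak{B}_h^k$ by trivial cohomology), and that both $z - u_h \perp \0\mathfrak{B}_{k,h}^{\ast}$ and $\Pi_k z - u_h \in \0\mathfrak{B}_h^k \perp \0\mathfrak{B}_{k,h}^{\ast}$ hold — the first requires the computation $(\mathrm{d}_h^{\ast}\psi_h, z) = (\psi_h, \mathrm{d}z) = (\psi_h, \mathrm{d}u_h) = (\mathrm{d}_h^{\ast}\psi_h, u_h)$, which hinges on $\mathrm{d}z = \mathrm{d}u_h$ and on $z$ having enough regularity for the integration by parts defining $\mathrm{d}_h^{\ast}$ to be valid (here $z\in Z_0^k \subset H\Lambda^k$ and $\psi_h\in\0H_h\Lambda^{k+1}$, so the pairing $(\psi_h,\mathrm{d}z)$ is legitimate and no boundary term appears since the trace of $\psi_h$ vanishes appropriately — this is where the vanishing boundary conditions enter). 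A secondary technical point is invoking the $L^p$-type or at least $L^2$ interpolation error estimate uniformly over the mesh family; this is standard for shape-regular meshes and the finite elements in the periodic table, and is exactly the ingredient cited from \cite{falk2014local,christiansen2011topics}. Once these are in place the chain of inequalities is immediate.
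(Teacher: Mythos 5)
Your overall strategy is the same as the paper's: insert $\Pi_{k}\mathcal{H}_{0}^{k}u_{h}$, use an $L^{2}$ orthogonality to reduce $\|u_{h}-\mathcal{H}_{0}^{k}u_{h}\|$ to the interpolation error $\|\mathcal{H}_{0}^{k}u_{h}-\Pi_{k}\mathcal{H}_{0}^{k}u_{h}\|$, and conclude via $s$-regularity. However, the orthogonality at the heart of your argument is wrong in two ways. First, the claim $z-u_{h}\perp \0\mathfrak{B}^{\ast}_{k,h}$ rests on the identity $(\mathrm{d}_{h}^{\ast}\psi_{h},z)=(\psi_{h},\mathrm{d}z)$, which is not valid: by \eqref{def:Dstar0} the discrete adjoint satisfies $(\mathrm{d}_{h}^{\ast}\psi_{h},v_{h})=(\psi_{h},\mathrm{d}v_{h})$ only for \emph{discrete} test functions $v_{h}$, and $z=\mathcal{H}_{0}^{k}u_{h}\in Z_{0}^{k}$ is not discrete. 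Replacing $z$ by its $L^{2}$ projection $P_{h}z$ onto $\0H_{h}\Lambda^{k}$ gives $(\mathrm{d}_{h}^{\ast}\psi_{h},z)=(\psi_{h},\mathrm{d}P_{h}z)$, and $\mathrm{d}P_{h}z\neq \mathrm{d}z$ in general, so the orthogonality is not merely unjustified but generally false. Second, even granting it, your conclusion does not follow: you correctly place $\Pi_{k}z-u_{h}$ in $\0\mathfrak{Z}_{h}^{k}=\0\mathfrak{B}_{h}^{k}$, which is the orthogonal \emph{complement} of $\0\mathfrak{B}^{\ast}_{k,h}$, so you end up knowing that $z-u_{h}$ and $\Pi_{k}z-u_{h}$ are both orthogonal to $\0\mathfrak{B}^{\ast}_{k,h}$ --- which says nothing about whether they are orthogonal to each other.

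The orthogonality you actually need is against the other summand of the discrete Hodge decomposition, namely $z-u_{h}\perp \mathrm{d}\0H_{h}\Lambda^{k-1}$, and this is what the paper proves. It is elementary: for $\phi_{h}\in \0H_{h}\Lambda^{k-1}$ one has $(z-u_{h},\mathrm{d}\phi_{h})=(\delta z,\phi_{h})-(\mathrm{d}_{h}^{\ast}u_{h},\phi_{h})=0$, using integration by parts for the first term (legitimate since $\phi_{h}$ has vanishing trace and $z\in H^{\ast}\Lambda^{k}(\Omega)$ with $\delta z=0$) and the defining relation of $\mathrm{d}_{h}^{\ast}$ between two discrete forms for the second, together with $\mathrm{d}_{h}^{\ast}u_{h}=0$ --- a consequence of $u_{h}\in \0\mathfrak{B}^{\ast}_{k,h}$ and $\left(\mathrm{d}_{h}^{\ast}\right)^{2}=0$ that you never invoke but which is essential. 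Since, as you correctly observe, $u_{h}-\Pi_{k}z=\mathrm{d}\phi_{h}$ for some $\phi_{h}\in \0H_{h}\Lambda^{k-1}$, this yields $(z-u_{h},\,u_{h}-\Pi_{k}z)=0$, hence $\|u_{h}-\Pi_{k}z\|\leq \|z-\Pi_{k}z\|$, after which your closing chain $\|z-\Pi_{k}z\|\lesssim h^{s}\|z\|_{s}\lesssim h^{s}\|\mathrm{d}u_{h}\|$ is exactly the paper's. The gap is therefore repairable, but as written the central step is incorrect.
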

\begin{proof}
We have
$$
\|{u}_{h}-{\mathcal{H}_{0}^{k}}{u}_{h}\|\leq \|{u}_{h}-\Pi_{k}{\mathcal{H}_{0}^{k}}{u}_{h}\|+\|\Pi_{k}{\mathcal{H}_{0}^{k}}{u}_{h}-{\mathcal{H}_{0}^{k}}{u}_{h}\|.
$$
For the first term,
\begin{align*}
\|{u}_{h}-\Pi_{k}{\mathcal{H}_{0}^{k}}{u}_{h}\|^{2}&=\left ( {u}_{h}-\Pi_{k}{\mathcal{H}_{0}^{k}}{u}_{h}, {u}_{h}-\Pi_{k}{\mathcal{H}_{0}^{k}}{u}_{h}   \right )\\
&=({u}_{h}-\Pi_{k}{\mathcal{H}_{0}^{k}}{u}_{h}, {u}_{h}-{\mathcal{H}_{0}^{k}}{u}_{h})+({u}_{h}-\Pi_{k}{\mathcal{H}_{0}^{k}}{u}_{h}, {\mathcal{H}_{0}^{k}}{u}_{h}-\Pi_{k}{\mathcal{H}_{0}^{k}}{u}_{h}).
\end{align*}
We note that
$$
\mathrm{d}({u}_{h}-\Pi_{k}{\mathcal{H}_{0}^{k}}{u}_{h})=0,
$$
due to the commuting diagram and the definition of ${\mathcal{H}_{0}^{k}}$. Therefore there exists ${\phi}_{h}\in \0 H_{h}\Lambda^{k-1}(\Omega)$, such that
$$
{u}_{h}-\Pi_{k}{\mathcal{H}_{0}^{k}}{u}_{h}=\mathrm{d}{\phi}_{h}.
$$
This implies
\begin{align*}
({u}_{h}-\Pi_{k}{\mathcal{H}_{0}^{k}}{u}_{h}, {u}_{h}-{\mathcal{H}_{0}^{k}}{u}_{h})=(\mathrm{d}{\phi}_{h}, {u}_{h}-{\mathcal{H}_{0}^{k}}{u}_{h})=({\phi}_{h}, \mathrm{d}_{h}^{\ast}{u}_{h}-\delta{\mathcal{H}_{0}^{k}}{u}_{h})=0.
\end{align*}
Consequently,
\begin{align*}
\|{u}_{h}-{\mathcal{H}_{0}^{k}}{u}_{h}\|\lesssim \|\Pi_{k}{\mathcal{H}_{0}^{k}}{u}_{h}- {\mathcal{H}_{0}^{k}}{u}_{h}\|\lesssim h^{s}\|{\mathcal{H}_{0}^{k}}{u}_{h}\|_{s}\lesssim h^{s}\|\mathrm{d}{\mathcal{H}_{0}^{k}}{u}_{h}\|= h^{s}\|\mathrm{d}{u}_{h}\|.
\end{align*}
Here the second inequality follows from the estimates for the interpolation operators \cite{Arnold.D;Falk.R;Winther.R.2006a}.
\end{proof}

\paragraph{Hodge mapping for the entire space}

We first prove a discrete Poincar\'{e} inequality for the entire space $\0H_{h}\Lambda^{k}(\Omega)$. Special cases of Theorem \ref{thm:discrete-poincare} for the face elements $H^{h}_{0}(\div, \Omega)$ and the edge elements $H^{h}_0(\curl, \Omega)$ can be found in \cite{chen2016multigrid} and \cite{hu2015structure}.
\begin{theorem}[discrete Poincar\'{e} inequality]\label{thm:discrete-poincare}
There exists a generic positive constant $C$ such that
\begin{align}\label{discrete-poincare-1}
\|{u}_{h}\|^{2}\leq C\left ( \| \mathrm{d}{u}_{h}\|^{2}+\|\mathrm{d}_{h}^{\ast}{u}_{h}\|^{2}\right ), \quad \forall {u}_{h}\in \0 H_{h}\Lambda^{k}(\Omega).
\end{align}
\end{theorem}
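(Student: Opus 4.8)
The plan is to combine the discrete Hodge decomposition \eqref{discrete-Hodge-2} with the Hodge mapping $\mathcal{H}_{0}^{k}$ and its approximation estimate (Theorem \ref{thm:part-hodge}). First I would split, using \eqref{discrete-Hodge-2}, $u_{h}=\mathrm{d}p_{h}+w_{h}$ with $w_{h}\in\0\mathfrak{B}_{k,h}^{\ast}=\mathrm{d}_{h}^{\ast}\0H_{h}\Lambda^{k+1}$ and $p_{h}\in\0H_{h}\Lambda^{k-1}$; correcting $p_{h}$ by a discrete closed form, which leaves $\mathrm{d}p_{h}$ unchanged, we may assume $p_{h}\in\0\mathfrak{B}_{k-1,h}^{\ast}$. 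Since the two summands are $L^{2}$-orthogonal, $\|u_{h}\|^{2}=\|\mathrm{d}p_{h}\|^{2}+\|w_{h}\|^{2}$, so it suffices to control each piece separately.

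The coexact piece is the easy one: $w_{h}$ lies in $\0\mathfrak{B}_{k,h}^{\ast}$, the domain of $\mathcal{H}_{0}^{k}$, and $\mathrm{d}w_{h}=\mathrm{d}u_{h}$ since $\mathrm{d}\mathrm{d}p_{h}=0$. Bounding the first term below by Theorem \ref{thm:part-hodge} (and $h^{s}\le(\diam\Omega)^{s}$) and the second by the continuous Poincar\'{e} inequality on $Z_{0}^{k}$ (that is, \eqref{s-regularity} with $s=0$, valid because $\delta\mathcal{H}_{0}^{k}w_{h}=0$) together with $\mathrm{d}\mathcal{H}_{0}^{k}w_{h}=\mathrm{d}w_{h}$, one gets
$$
\|w_{h}\|\le\|w_{h}-\mathcal{H}_{0}^{k}w_{h}\|+\|\mathcal{H}_{0}^{k}w_{h}\|\lesssim h^{s}\|\mathrm{d}w_{h}\|+\|\mathrm{d}w_{h}\|\lesssim\|\mathrm{d}w_{h}\|=\|\mathrm{d}u_{h}\|.
$$
The very same argument, applied one degree lower to $\mathcal{H}_{0}^{k-1}$, yields the auxiliary discrete Poincar\'{e} inequality $\|p_{h}\|\lesssim\|\mathrm{d}p_{h}\|$ for $p_{h}\in\0\mathfrak{B}_{k-1,h}^{\ast}$.

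The exact piece $\mathrm{d}p_{h}$ is the main obstacle, because $\mathcal{H}_{0}^{k}$ is defined only on the coexact subspace and cannot be applied to $\mathrm{d}p_{h}$ directly; the idea is to push the estimate one degree down by duality. Since $w_{h}\in\mathrm{d}_{h}^{\ast}\0H_{h}\Lambda^{k+1}$ and $(\mathrm{d}_{h}^{\ast})^{2}=0$, we have $\mathrm{d}_{h}^{\ast}w_{h}=0$, hence $\mathrm{d}_{h}^{\ast}u_{h}=\mathrm{d}_{h}^{\ast}(\mathrm{d}p_{h})$. Using the definition \eqref{def:Dstar0} of $\mathrm{d}_{h}^{\ast}$, the auxiliary inequality, and Cauchy--Schwarz,
$$
\|\mathrm{d}p_{h}\|^{2}=(\mathrm{d}p_{h},\mathrm{d}p_{h})=(p_{h},\mathrm{d}_{h}^{\ast}(\mathrm{d}p_{h}))=(p_{h},\mathrm{d}_{h}^{\ast}u_{h})\le\|p_{h}\|\,\|\mathrm{d}_{h}^{\ast}u_{h}\|\lesssim\|\mathrm{d}p_{h}\|\,\|\mathrm{d}_{h}^{\ast}u_{h}\|,
$$
so $\|\mathrm{d}p_{h}\|\lesssim\|\mathrm{d}_{h}^{\ast}u_{h}\|$. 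Adding this to the bound on $\|w_{h}\|$ gives $\|u_{h}\|^{2}\lesssim\|\mathrm{d}u_{h}\|^{2}+\|\mathrm{d}_{h}^{\ast}u_{h}\|^{2}$, as claimed. The degenerate cases $k=0$ (no exact part) and $k=n$ (no coexact part, after the quotient by $\mathbb{R}$) follow from the surviving half of the argument.
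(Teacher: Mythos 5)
Your proof is correct and follows essentially the same route as the paper: split $u_h$ by the discrete Hodge decomposition, bound the coexact part by $\|\mathrm{d}u_h\|$, and handle the exact part $\mathrm{d}p_h$ by the duality identity $\|\mathrm{d}p_h\|^2=(p_h,\mathrm{d}_h^\ast u_h)\le\|p_h\|\,\|\mathrm{d}_h^\ast u_h\|$ combined with $\|p_h\|\lesssim\|\mathrm{d}p_h\|$, which is exactly the paper's computation \eqref{poincare-l2dual} with your $p_h$ playing the role of its $v_h$. The only cosmetic difference is that you re-derive the Poincar\'e inequality on the coexact subspaces from Theorem \ref{thm:part-hodge} (requiring the $s$-regularity hypothesis), whereas the paper simply cites the classical discrete Poincar\'e inequality of Arnold--Falk--Winther, which needs only bounded cochain projections.
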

\begin{proof}
 For any  ${u}_{h}\in \0 H_{h}\Lambda^{k}(\Omega)$, we have the Hodge decomposition ${u}_{h}={u}_{1}+{u}_{2}$, where ${u}_{1}\in \left (\0{\mathfrak{B}}^{k}_{h}\right )^{\perp}$ satisfies $\mathrm{d}_{h}^{\ast}{u}_{1}=0$ and  ${u}_{2}\in \0{\mathfrak{B}}^{k}_{h}$ satisfies $\mathrm{d}{u}_{2}=0$.  For ${u}_{1}$, we have $ \|u_{1}\|\leq C\|\mathrm{d}u_{1}\|=C\|\mathrm{d}u_{h}\|$ (c.f. \cite{Arnold.D;Falk.R;Winther.R.2006a}). Then it remains to show $\|u_{2}\|\leq C\|\mathrm{d}_{h}^{\ast}u_{2}\|= C\|\mathrm{d}_{h}^{\ast}u\|$.

In fact, for ${u}_{2}\in \0\mathfrak{B}_{h}^{k}$ we can choose $v_{h}\in \0 H_{h}\Lambda^{k-1}(\Omega)$ such that $\mathrm{d} {v}_{h}={u}_{2}$ and $\mathrm{d}_{h}^{\ast}{v}_{h}=0$. By the classical discrete Poincar\'e inequality in \cite{Arnold.D;Falk.R;Winther.R.2006a}, we have $\|{v}_{h}\|_{H\Lambda}\lesssim \|\mathrm{d}{v}_{h}\|=\|{u}_{2}\|$.

Then we have
\begin{align}\label{poincare-l2dual}
\|\mathrm{d}_{h}^{\ast}{u}_{2}\|=\sup_{{w}_{h}\in \0H_{h}\Lambda^{k-1}(\Omega)}\frac{(\mathrm{d}_{h}^{\ast}{u}_{2}, {w}_{h})}{\|{w}_{h}\|}=\sup_{{w}_{h}\in \0H_{h}\Lambda^{k-1}(\Omega)}\frac{({u}_{2}, \mathrm{d}{w}_{h})}{\|{w}_{h}\|}\geq \frac{({u}_{2}, \mathrm{d} {v}_{h})}{\|{v}_{h}\|}\gtrsim \|{u}_{2}\|.
\end{align}
\end{proof}

Now we are in a position to define a generalized Hodge mapping. 
Define ${\mathcal{H}^k}: \0 H_{h}\Lambda^{k}(\Omega)\mapsto Z^{k}$ by
\begin{equation}\label{def:Hk}
\begin{cases}
&\mathrm{d}{\mathcal{H}^k}{u}_{h}=\mathrm{d}{u}_{h}, \\
&\left (  \delta{\mathcal{H}^k}{u}_{h}, \delta z \right )=\left ( \mathrm{d}_{h}^{\ast}{u}_{h}, \delta z \right ), \quad \forall z\in Z^{k}.
\end{cases}
\end{equation}
Using the identity   $(\mathrm{d}{\mathcal{H}^k}{u}_{h}, \delta{\mathcal{H}^k}{u}_{h})=0$ and the Poincar\'{e} inequality in $Z^{k}$, i.e.,
$$
\|{\mathcal{H}^k}{u}_{h}\|\lesssim \|\mathrm{d}{\mathcal{H}^k}{u}_{h}\|+\|\delta {\mathcal{H}^k}{u}_{h}\|,
$$
we see that ${\mathcal{H}^k}$ is well-defined.

Taking $z={\mathcal{H}^k}{u}_{h}$ in \eqref{def:Hk}, we obtain
\begin{align}
\|\delta  {\mathcal{H}^k}{u}_{h}\|\leq \|\mathrm{d}_{h}^{\ast} {u}_{h}\|.
\end{align}

By the Hodge decomposition at the continuous level \cite{Arnold.D;Falk.R;Winther.R.2006a}, we have  $\delta \0H^{\ast}\Lambda^{k}(\Omega)=\delta {Z}^{k}$. Therefore taking $\delta z={w}\in \delta \0H^{\ast}\Lambda^{k-1}(\Omega)$ in \eqref{def:Hk}, we have
\begin{align}\label{d-ast-H}
\left(\delta {\mathcal{H}^k}{u}_{h}, {w} \right)=\left (\mathrm{d}_{h}^{\ast}{u}_{h}, {w} \right ), \quad \forall {w}\in \delta \0H^{\ast}\Lambda^{k}(\Omega).
\end{align}

Finally we prove the approximation of ${\mathcal{H}^k}$.
\begin{theorem}\label{Hd-approximation}
Let $\Omega$ be an $s$-regular domain. We have
$$
\|{u}_{h}-{\mathcal{H}^k}{u}_{h}\|\lesssim h^{s}\left (\|\mathrm{d}{u}_{h}\|+  \|\mathrm{d}_{h}^{\ast}{u}_{h}\|   \right ), \quad \forall{u}_{h}\in \0H_{h}\Lambda^{k}.
$$
\end{theorem}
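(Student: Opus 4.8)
The plan is to follow the same pattern as in Theorem~\ref{thm:part-hodge}: insert $\Pi_{k}\mathcal{H}^{k}u_{h}$ as an intermediate object and reduce the estimate to a bound on the purely discrete quantity $e_{h}:=u_{h}-\Pi_{k}\mathcal{H}^{k}u_{h}\in\0H_{h}\Lambda^{k}$. By the triangle inequality $\|u_{h}-\mathcal{H}^{k}u_{h}\|\leq\|e_{h}\|+\|\mathcal{H}^{k}u_{h}-\Pi_{k}\mathcal{H}^{k}u_{h}\|$, and the second term is controlled immediately: by the $L^{2}$ interpolation error estimate, the $s$-regularity \eqref{s-regularity} of $Z^{k}$, and the identities $\mathrm{d}\mathcal{H}^{k}u_{h}=\mathrm{d}u_{h}$ and $\|\delta\mathcal{H}^{k}u_{h}\|\leq\|\mathrm{d}_{h}^{\ast}u_{h}\|$ recorded just after \eqref{def:Hk}, it is $\lesssim h^{s}\|\mathcal{H}^{k}u_{h}\|_{s}\lesssim h^{s}(\|\mathrm{d}u_{h}\|+\|\mathrm{d}_{h}^{\ast}u_{h}\|)$. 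So everything comes down to estimating $\|e_{h}\|$.

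First I would pin down the structure of $e_{h}$. Since $\mathrm{d}\Pi_{k}=\Pi_{k+1}\mathrm{d}$ and $\mathrm{d}\mathcal{H}^{k}u_{h}=\mathrm{d}u_{h}\in\0H_{h}\Lambda^{k+1}$, we get $\mathrm{d}e_{h}=\mathrm{d}u_{h}-\Pi_{k+1}\mathrm{d}u_{h}=0$; hence $e_{h}=\mathrm{d}\phi_{h}$ for some $\phi_{h}\in\0H_{h}\Lambda^{k-1}$ which, by the discrete Hodge decomposition, can be chosen in $\0\mathfrak{B}^{\ast}_{k-1,h}$ (equivalently $\mathrm{d}_{h}^{\ast}\phi_{h}=0$), and then the classical discrete Poincar\'e inequality \cite{Arnold.D;Falk.R;Winther.R.2006a} gives $\|\phi_{h}\|_{H\Lambda}\lesssim\|\mathrm{d}\phi_{h}\|=\|e_{h}\|$. (The case $k=0$ is trivial, since then $e_{h}$ is a constant with vanishing trace, hence $0$.) Next, expanding $\|e_{h}\|^{2}=(e_{h},u_{h}-\mathcal{H}^{k}u_{h})+(e_{h},\mathcal{H}^{k}u_{h}-\Pi_{k}\mathcal{H}^{k}u_{h})$, the last term is again $\leq\|e_{h}\|\,h^{s}(\|\mathrm{d}u_{h}\|+\|\mathrm{d}_{h}^{\ast}u_{h}\|)$ as above, while using $e_{h}=\mathrm{d}\phi_{h}$, the definition of $\mathrm{d}_{h}^{\ast}$, and integration by parts (legitimate since $\tr\phi_{h}=0$ and $\mathcal{H}^{k}u_{h}\in H^{\ast}\Lambda^{k}$) one obtains $(e_{h},u_{h}-\mathcal{H}^{k}u_{h})=(\phi_{h},\mathrm{d}_{h}^{\ast}u_{h}-\delta\mathcal{H}^{k}u_{h})$.

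The heart of the proof is to show this last pairing is $O(h^{s})$. Since $\phi_{h}\in\0\mathfrak{B}^{\ast}_{k-1,h}$, I would lift it to the continuous level by the weakly constrained Hodge mapping of the previous subsection, $w:=\mathcal{H}_{0}^{k-1}\phi_{h}\in Z_{0}^{k-1}$. The key claim is that $w\in\delta Z^{k}$, i.e. that $w$ is an admissible test form in the second line of \eqref{def:Hk} (equivalently in \eqref{d-ast-H}); granting this, $(w,\mathrm{d}_{h}^{\ast}u_{h}-\delta\mathcal{H}^{k}u_{h})=0$, so $(\phi_{h},\mathrm{d}_{h}^{\ast}u_{h}-\delta\mathcal{H}^{k}u_{h})=(\phi_{h}-w,\mathrm{d}_{h}^{\ast}u_{h}-\delta\mathcal{H}^{k}u_{h})\leq\|\phi_{h}-w\|(\|\mathrm{d}_{h}^{\ast}u_{h}\|+\|\delta\mathcal{H}^{k}u_{h}\|)$, and Theorem~\ref{thm:part-hodge} (applied in degree $k-1$) gives $\|\phi_{h}-w\|=\|\phi_{h}-\mathcal{H}_{0}^{k-1}\phi_{h}\|\lesssim h^{s}\|\mathrm{d}\phi_{h}\|=h^{s}\|e_{h}\|$, while $\|\delta\mathcal{H}^{k}u_{h}\|\leq\|\mathrm{d}_{h}^{\ast}u_{h}\|$. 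Combining the three contributions yields $\|e_{h}\|^{2}\lesssim h^{s}\|e_{h}\|(\|\mathrm{d}u_{h}\|+\|\mathrm{d}_{h}^{\ast}u_{h}\|)$, hence $\|e_{h}\|\lesssim h^{s}(\|\mathrm{d}u_{h}\|+\|\mathrm{d}_{h}^{\ast}u_{h}\|)$, and the first paragraph closes the argument.

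I expect the main obstacle to be precisely the claim $w=\mathcal{H}_{0}^{k-1}\phi_{h}\in\delta Z^{k}$: one must verify that the weakly constrained lift of a discretely co-exact form is genuinely of the form $\delta z$ with $z\in Z^{k}$. This is where the continuous Hodge decomposition is essential — $w$ is co-closed with vanishing trace, and every such form belongs to $\delta Z^{k}$ (which coincides with $\delta\0H^{\ast}\Lambda^{k}(\Omega)$, as recalled before \eqref{d-ast-H}); concretely, decomposing $w$ in $L^{2}\Lambda^{k-1}$ one shows its $\mathrm{d}\0H\Lambda^{k-2}$-component is $\mathrm{d}$-closed by an integration-by-parts argument using $\tr w=0$ and $\delta w=0$, so $w$ is purely co-exact and a valid potential exists in $Z^{k}$. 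This ``compatibility between discrete and continuous co-exactness'' is exactly the new ingredient that lets the Hodge mapping work on the whole space $\0H_{h}\Lambda^{k}$ rather than only on the weakly constrained subspace.
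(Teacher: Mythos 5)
Your proof is correct and follows essentially the same route as the paper's: reduce to the purely discrete quantity ${u}_{h}-\Pi_{k}\mathcal{H}^{k}{u}_{h}=\mathrm{d}\phi_{h}$ with $\phi_{h}\in\0\mathfrak{B}^{\ast}_{k-1,h}$, lift $\phi_{h}$ by the weakly constrained Hodge mapping $\mathcal{H}_{0}^{k-1}$, and combine \eqref{d-ast-H} with Theorem \ref{thm:part-hodge} to gain the factor $h^{s}$, closing with the interpolation estimate and a triangle/Young argument. The only (welcome) difference is that you explicitly verify that $\mathcal{H}_{0}^{k-1}\phi_{h}$ is an admissible test form in \eqref{d-ast-H}, i.e.\ that it lies in $\delta\0H^{\ast}\Lambda^{k}(\Omega)$, a point the paper's proof uses but leaves implicit.
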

\begin{proof}
Thanks to the commuting diagram (the interpolation operator $\Pi_{\bs}$ commutes with the exterior derivatives), we have $\mathrm{d}
  \left({u}_{h}-\Pi_{k}{\mathcal{H}^k}{{u}_{h}}\right)=0$, so  there exists ${\phi}_{h}\in
\0{\mathfrak{B}}_{k-1, h}^{\ast}$ satisfying $\mathrm{d}_{h}^{\ast}\phi_{h}=0$  such that
  ${u}_{h}-\Pi_{k}{\mathcal{H}^k}{{u}}=\mathrm{d}
  {\phi}_{h}=\mathrm{d} {\mathcal{H}_0^{k-1}}{{\phi}_{h}}$ and
  \begin{equation}
    \label{phi-phi}
\|{\phi}_{h}-{\mathcal{H}_0^{k-1}}{{\phi}_{h}}\|\lesssim h^{s}\|\mathrm{d} {\phi}_{h}\|=h^{s}\|{u}_{h}-\Pi_{k}{\mathcal{H}^k}{u}_{h}\|,
  \end{equation}

From the definition of ${\mathcal{H}^k}{{u}_{h}}$, we have
$$
(\mathrm{d}_{h}^{\ast}{u}_{h}, {\mathcal{H}_0^{k-1}}{{\phi}_{h}})=(\delta {\mathcal{H}^k}{{u}_{h}}, {\mathcal{H}_0^{k-1}}{{\phi}_{h}})=({\mathcal{H}^k}{{u}_{h}}, \mathrm{d}{\mathcal{H}_0^{k-1}}{{\phi}_{h}}),
$$
and
$$
({u}_{h}, \mathrm{d} {\phi}_{h})=(\mathrm{d}_{h}^{\ast} {u}_{h}, {\phi}_{h})=(\mathrm{d}_{h}^{\ast} {u}_{h}, {\phi}_{h}-{\mathcal{H}_0^{k-1}}{{\phi}_{h}})+({\mathcal{H}^k}{{u}_{h}}, \mathrm{d} {\mathcal{H}_0^{k-1}}{{\phi}_{h}}).
$$
The last identity is due to \eqref{d-ast-H}.
Therefore,
$$
({u}_{h}-{\mathcal{H}^k}{{u}_{h}}, {u}_{h}-\Pi_{k}{\mathcal{H}^k}{{u}_{h}})=(\mathrm{d}_{h}^{\ast} {u}_{h}, {\phi}_{h}-{\mathcal{H}_0^{k-1}}{{\phi}_{h}}).
$$
Thus
\begin{align*}
\|{u}_{h}-{\mathcal{H}^k}{{u}_{h}}\|^{2}&=({u}_{h}-{\mathcal{H}^k}{{u}_{h}}, {u}_{h}-\Pi_{k}{\mathcal{H}^k}{{u}_{h}})+({u}_{h}-{\mathcal{H}^k}{{u}_{h}}, \Pi_{k}{\mathcal{H}^k}{{u}_{h}}-{\mathcal{H}^k}{{u}_{h}})\\&
=(\mathrm{d}_{h}^{\ast} {u}_{h}, {\phi}_{h}-{\mathcal{H}_0^{k-1}}{{\phi}_{h}})+({u}_{h}-{\mathcal{H}^k}{{u}_{h}}, \Pi_{k}{\mathcal{H}^k}{{u}_{h}}-{\mathcal{H}^k}{{u}_{h}}).
\end{align*}

From Theorem \ref{thm:part-hodge},
$$
\|{\mathcal{H}^k}{{u}_{h}}-\Pi_{k}{\mathcal{H}^k}{{u}_{h}}\|\lesssim h^{s}\|{\mathcal{H}^k}{{u}_{h}}\|_{s}
\lesssim h^{s} \left (  \|\mathrm{d} {\mathcal{H}^k}{{u}_{h}}\| + \|\delta{\mathcal{H}^k}{u}_{h}\|\right )       \leq h^{s} \left (  \|\mathrm{d}{u}_{h}\|+\|\mathrm{d}_{h}^{\ast}{u}_{h}\|\right ).
$$

By \eqref{phi-phi} and
\begin{align*}
\left | (\mathrm{d}_{h}^{\ast} {u}_{h}, {\phi}_{h}-{\mathcal{H}_0^{k-1}}{{\phi}_{h}}) \right |&\lesssim h^{s}\|{u}_{h}-\Pi_{k}{\mathcal{H}^k}{{u}_{h}}\|\|\mathrm{d}_{h}^{\ast}{u}_{h}\|\\&
\leq h^{s}\left (\|{u}_{h}-{\mathcal{H}^k}{{u}_{h}}\|  + \|{\mathcal{H}^k}{{u}_{h}}-\Pi_{k}{\mathcal{H}^k}{{u}_{h}}\|   \right)\|\mathrm{d}_{h}^{\ast} {u}_{h}\|\\&
\lesssim h^{s}\|{u}_{h}-{\mathcal{H}^k}{{u}_{h}}\|\|\mathrm{d}_{h}^{\ast}{u}_{h}\|  +  h^{2s}\|\mathrm{d}_{h}^{\ast} {u}_{h}\|^{2}+  h^{2s}\|\mathrm{d} {u}_{h}\|^{2}\\&
\leq \frac{1}{2} \|{u}_{h}-{\mathcal{H}^k}{{u}_{h}}\|^{2}+\frac{1}{2}h^{2s}\|\mathrm{d}_{h}^{\ast} {u}_{h}\|^{2}  +  h^{2s}\|\mathrm{d}_{h}^{\ast} {u}_{h}\|^{2}+  h^{2s}\|\mathrm{d} {u}_{h}\|^{2},
\end{align*}
we obtain
$$
\|{u}_{h}-{\mathcal{H}^k}{{u}_{h}}\|^{2}\lesssim
 \|{\mathcal{H}^k}{{u}_{h}}-\Pi_{k}{\mathcal{H}^k}{{u}_{h}}\|^{2}
+ h^{2s}\left  (  \|\mathrm{d}_{h}^{\ast} {u}_{h}\|^2 +\|\mathrm{d} {u}_{h}\|^2  \right ) .
$$

This completes the proof.
\end{proof}

\section{Vector proxies and applications}\label{sec:vector}

With the vector proxies \cite{Arnold.D;Falk.R;Winther.R.2010a}, the generalized Gaffney inequalities for discrete differential forms yield estimates for the finite element methods. Some of these estimates are, as far as we know, new. 

The Hodge Laplacian problems in three space dimensions boils down to the Poisson equation
$$
-\Delta u =f,
$$
and the vector Laplacian problem
$$
\curl\curl \bm{w}-\grad\div \bm{w}=\bm{g}, 
$$
respectively.  Let $\Omega$ be an $s$-regular domain and $p=3/(3-s)$, and let $\grad_{h}$, $\curl_{h}$, $\div_{h}$ be the $L^{2}$ adjoint operators of $-\div_{h}$, $\curl_{h}$, $-\grad_{h}$ respectively. Below we use the generalized Gaffney inequality to give some estimates for various finite element discretizations for these two problems. 

\paragraph{Primal formulation for the scalar Poisson.} In this case we have 
\begin{align}\label{scalar-poisson}
-\div_{h}\grad u_{h}=\mathbb{P}_{0}f,
\end{align}
where $u_{h}$ is discretized by the Lagrange elements and $\mathbb{P}_{0}$ is the $L^{2}$ projection to the finite element space. The energy estimate gives $\|\grad u_{h}\|\leq \|f\|$. Then the standard Poincar\'{e} inequality and the Sobolev imbedding  imply $\|u_{h}\|_{0, p}\lesssim \|f\|$.

Considering $u_{h}$ as a discrete 1-form,  we conclude from Theorem \ref{thm:discrete-sobolev} with $k=1$, the equation \eqref{scalar-poisson} and the identity $\curl\grad u_{h}=0$ that the inequality $\|\grad u_h\|_{0, p}\lesssim \|f\|$ holds.

\paragraph{Mixed formulation for the scalar Possion.}

The mixed finite element formulation for the scalar Poisson equation boils down to solving 
\begin{align}\label{divgrad-h}
-\div\grad_{h}u_{h}=\mathbb{P}_{3}f,
\end{align}
 where $u_{h}$ is discretized by piecewise polynomials identified as a discrete 3-form and $\mathbb{P}_{3}$ is the $L^{2}$ projection to this space. In the implementation, one more variable $\bm{\sigma}_{h}=\grad_{h}u_{h}$ in the BDM/RT space is introduced.  

Testing the equation by $u_{h}$, we get $\|\grad u_{h}\|\leq \|f\|$. Together with proper boundary conditions, Theorem \ref{thm:discrete-sobolev} with $k=3$ let us conclude with the estimate $\|u_{h}\|_{0, p}\lesssim \|f\|$.   Considering $\bm{\sigma}_{h}=\grad_{h}u_{h}$ as a discrete 2-form, we further get from \eqref{divgrad-h} and the identity $\curl_{h}\grad_{h}u_{h}=0$ that
$\|\bm{\sigma}_{h}\|_{0, p}\lesssim \|f\|$.

\paragraph{1-form based mixed formulation for the vector Laplacian.}

Treating $\bm{w}_{h}$ as a discrete 1-form, we obtain a mixed finite element discretization for the vector Laplacian problem:
\begin{align}\label{1-form}
\curl_{h}\curl \bm{w}_{h}-\grad\div_{h}\bm{w}_{h}=\mathbb{P}_{1}\bm{g},
\end{align}
where $\bm{w}_{h}$ is discretized by the 1st/2nd N\'{e}d\'{e}lec element and $\mathbb{P}_{1}$ is the corresponding $L^{2}$ projection .

Testing \eqref{1-form} by $\bm{w}_{h}$, one obtains the estimate $\|\curl \bm{w}_{h}\|+\|\div_{h} \bm{w}_{h}\|\leq \|\bm{g}\|$. Then Theorem \ref{thm:discrete-sobolev} with $k=1$ implies $\|\bm{w}_{h}\|_{0, p}\lesssim \|\bm{g}\|$.

\paragraph{2-form based mixed formulation for the vector Laplacian.}

Discretizing $\bm{w}_{h}$ as a discrete 2-form in the BDM/RT space, we get another mixed finite element method for the vector Laplacian:
$$
\curl\curl_{h} \bm{w}_{h}-\grad_{h}\div\bm{w}_{h}=\mathbb{P}_{2}\bm{g},
$$
where $\mathbb{P}_{2}$ is the $L^{2}$ projection to the finite element space. In this case, we have $\|\curl_{h} \bm{w}_{h}\|+\|\div \bm{w}_{h}\|\leq \|\bm{g}\|$ and Theorem \ref{thm:discrete-sobolev} with $k=2$ let us conclude that $\|\bm{w}_{h}\|_{0, p}\lesssim \|\bm{g}\|$.

\section{Conclusion}\label{sec:conclusion}

We generalize the Hodge mapping for the weakly divergence-free edge elements \cite{Hiptmair.R.2002a} and the strongly divergence-free face elements \cite{hu2015structure} to  general  discrete differential forms.  Based on the new Hodge mapping, we further  prove the generalized Gaffney inequality and the discrete compactness for discrete differential forms.  

In the study of the Hodge mappings, the commuting interpolations act as a bridge between the continuous and the discrete levels. Therefore we hope that the techniques presented in this paper could be further explored for high order methods ($p$- version) or problems involving  general Hilbert complexes \cite{Arnold.D;Falk.R;Winther.R.2010a} provided that we have regularity results at the continuous level and suitable bounded commuting interpolations.

Several variants of the discrete compactness exist (c.f. \cite{christiansen2012variational}). The results in this paper may be further explored for these variants.

\section*{Acknowledgement}

Juncai He is supported in part by National Natural Science Foundation of China (NSFC) (Grant No. 91430215) and the Elite Program of Computational and Applied Mathematics for PHD Candidates of Peking University.
Kaibo Hu is supported in part by the  European Research Council under the European Union's Seventh Framework Programme (FP7/2007-2013) / ERC grant agreement 339643.
Jinchao Xu is supported in part by DOE Grant DE-SC0014400.

The authors are grateful to Prof. Ralf Hiptmair for several helpful suggestions.
\bibliographystyle{plain}      
\bibliography{gaffney}{}   

\end{document}